\tikzset{every loop/.style={min distance=10mm,looseness=10}}
\tikzset{every state/.style={minimum size=2mm}}
\newtheorem{theorem}{Theorem}
\newtheorem{lemma}[theorem]{Lemma}
\newtheorem{remark}[theorem]{Remark}
\title{On semi-transitive orientability of Kneser graphs and their complements}
\author{Sergey Kitaev\footnote{Department of Computer and Information Sciences, University of Strathclyde, 26 Richmond Street, Glasgow G1, 1XH, United Kingdom. 
{\bf Email:} sergey.kitaev@cis.strath.ac.uk.}\ \ and Akira Saito\footnote{Department of Information Science, Nihon University, Sakurajosui 3-25-40
Setagaya-Ku Tokyo 156--8550, Japan. {\bf Email:} asaito@chs.nihon-u.ac.jp.}}
\begin{document}

\maketitle

\begin{abstract}
An orientation of a graph is semi-transitive if it is acyclic, and for any directed path $v_0\rightarrow v_1\rightarrow \cdots\rightarrow v_k$ either there is no edge between $v_0$ and $v_k$, or $v_i\rightarrow v_j$ is an edge for all $0\leq i<j\leq k$. An undirected graph is semi-transitive if it admits a semi-transitive orientation. Semi-transitive graphs include several important classes of graphs such as 3-colorable graphs, comparability graphs, and circle graphs, and they are precisely the class of word-representable graphs studied extensively in the literature.  

In this paper, we study semi-transitive orientability of the celebrated Kneser graph $K(n,k)$, which is the graph whose vertices correspond to the $k$-element subsets of a set of $n$ elements, and where two vertices are adjacent if and only if the two corresponding sets are disjoint. We show that for $n\geq 15k-24$, $K(n,k)$ is not semi-transitive, while for $k\leq n\leq 2k+1$, $K(n,k)$ is semi-transitive. Also, we show computationally that a subgraph $S$ on 16 vertices and 36 edges of $K(8,3)$, and thus $K(8,3)$ itself on 56 vertices and 280 edges, is non-semi-transitive. $S$ and $K(8,3)$ are the first explicit examples of triangle-free non-semi-transitive graphs, whose existence was established via Erd\H{o}s' theorem by Halld\'{o}rsson et al. in 2011. Moreover, we show that the complement graph $\overline{K(n,k)}$ of $K(n,k)$ is semi-transitive if and only if $n\geq 2k$.
\end{abstract}

\section{Introduction}
An orientation of a graph is {\em semi-transitive} if it is acyclic, and for any directed path $v_0\rightarrow v_1\rightarrow \cdots \rightarrow v_k$ either there is no edge between $v_0$ and $v_k$, or $v_i\rightarrow v_j$ is an edge for all $0\leq i<j\leq k$. The notion of a semi-transitive orientation generalizes that of a {\em transitive orientation}. An undirected graph is {\em semi-transitive} if it admits a semi-transitive orientation. Not all graphs are semi-transitive, and the minimum (by the number of vertices) non-semi-transitive graph is the {\em wheel graph} $W_5$ on 6 vertices. Note that any complete graph can be oriented transitively, and thus semi-transitively.

A {\em shortcut} $C$ in a directed acyclic graph is an induced subgraph on vertices  $\{v_0,v_1,\ldots, v_k\}$ for $k\geq 4$ such that $v_0\rightarrow v_1\rightarrow \cdots \rightarrow v_k$ is a directed path, $v_0\rightarrow v_k$ is an edge, and there exist $0\leq i<j\leq k$ such that there is no edge between $v_i$ and $v_j$. Thus, $C$ has no directed cycles and it is non-transitive, and an orientation is semi-transitive if and only if it is acyclic and shortcut-free. The edge $v_0\rightarrow v_k$ in $C$ is called the {\em shortcutting edge}, and the path $v_0\rightarrow v_1\rightarrow \cdots \rightarrow v_k$ is the {\em long path} in $C$.

The notion of a semi-transitive orientation was introduced by Halld\'{o}rsson, Kitaev and Pyatkin \cite{HKP11} in 2011 as a powerful tool to study {\em word-representable graphs} defined in Section~\ref{sec2} via alternation of letters in words and studied extensively in the recent years (see  \cite{K17,KL15} and references therein). The class of semi-transitive graphs is precisely the hereditary class of word-representable graphs. The roots of the theory of word-representable graphs, i.e.\ semi-transitive graphs, are in the study of the celebrated {\em Perkins semigroup} in \cite{KS08}, which has played a central role in semigroup theory since 1960, particularly as a source of examples and counterexamples. However, the significance of the class of semi-transitive graphs is in the fact that it includes several important classes of graphs such as  $3$-colorable graphs, comparability graphs and circle graphs \cite{K17,KL15}. Note that the user-friendly software \cite{G} by Glen is of special importance for the development of the area, and we use it in this paper to show a particular result. 

For any two integers $k\geq 1$ and $n\geq k$, the  Kneser graph $K(n,k)$ is the graph whose vertices correspond to the $k$-element subsets of a set $[n]:=\{1,2,\ldots,n\}$, where two vertices are adjacent if and only if the two corresponding sets are disjoint\footnote{Often, when defining the Kneser graph $K(n,k)$, the assumption is that $n\geq 2k+1$ to avoid dealing with null graphs, which are meaningful in our context, since we also deal with $\overline{K(n,k)}$.}. In particular, $K(5,2)$ is isomorphic to the celebrated {\em Petersen graph}. When writing down subsets, we omit brackets and commas. Thus, for example, the subset $\{1,4,6,7\}$ is recorded by us as 1467, while the subset $\{2,9,10\}$ as 29(10), etc. A Kneser graph is vertex transitive and edge transitive, and these graphs are named after Martin Kneser, who first investigated them in 1955. There is a long line of research dedicated to Kneser graphs; see the recent \cite{MNW} and references therein. 

In this paper we study semi-transitive orientability of Kneser graphs and their complements. Our main results can be summarized as follows:
\begin{itemize}
\item For $n\geq 15k-24$, $K(n,k)$ is not semi-transitive (see Theorem~\ref{Knes-main-thm}), while for $k\leq n\leq 2k+1$ $K(n,k)$ is semi-transitive (see Theorem~\ref{3-col-Kneser}). Moreover, it is shown by the software \cite{G} that $K(8,3)$ is not semi-transitive (see the discussion at the end of Section~\ref{sec3}). A certain subgraph $S$ of $K(8,3)$ presented in Figure~\ref{K83-fig} and $K(8,3)$ itself are the first explicit examples of triangle-free non-semi-transitive graphs, whose existence was established via Erd\H{o}s' theorem by Halld\'{o}rsson, Kitaev and Pyatkin in \cite{HKP11}.
\item The complement graph $\overline{K(n,k)}$ of $K(n,k)$ is semi-transitive if and only if $n\geq 2k$  (see Theorem~\ref{complement-main}). 
\end{itemize}

\section{Known results to be used in the paper}\label{sec2}

We begin with the following well known result. 

\begin{theorem}\label{long-directed-path} Let $G$ be an acyclically oriented graph with chromatic number $m$. Then, $G$ contains a directed path of length at least $m-1$. \end{theorem}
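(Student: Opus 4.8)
The plan is to prove this well-known fact (the Gallai--Roy theorem) by extracting an explicit proper colouring of $G$ from the given acyclic orientation, using at most one more colour than the length of a longest directed path. Since $G$ is finite and its orientation is acyclic, every directed path is finite in length; let $\ell$ denote the length (number of edges) of a longest directed path in $G$, so that $\ell \geq 0$.

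For each vertex $v$ I would set $f(v)$ to be the length of a longest directed path that \emph{ends} at $v$; thus $f(v) \in \{0,1,\ldots,\ell\}$, with $f(v)=0$ when $v$ is a source. The key step is to verify that $f$ is a proper colouring. Suppose $u\rightarrow v$ is an edge and let $P$ be a longest directed path ending at $u$. Appending the edge $u\rightarrow v$ to $P$ gives a directed walk ending at $v$, and this walk is in fact a path: otherwise $v$ would already lie on $P$, which would give a directed path from $v$ to $u$ that, together with the edge $u\rightarrow v$, forms a directed cycle, contradicting acyclicity. Hence $f(v)\geq f(u)+1$, and in particular $f(u)\neq f(v)$, so adjacent vertices receive distinct colours.

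Since $f$ takes at most $\ell+1$ distinct values, we conclude $m=\chi(G)\leq \ell+1$, i.e.\ $\ell\geq m-1$, which is precisely the assertion. The only point requiring any care is the proof that this path-length labelling is proper, and there the sole nontrivial observation is the use of acyclicity to ensure that extending a longest directed path by one outgoing edge cannot revisit a vertex; the rest is bookkeeping, so I do not expect a genuine obstacle here.
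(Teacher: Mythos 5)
Your proof is correct and is essentially the paper's argument in a different guise: the paper peels off sources layer by layer and colours by round, and the round in which a vertex $v$ is removed is exactly your $f(v)+1$, so both proofs construct the same proper colouring with at most $\ell+1$ colours and conclude $\chi(G)\leq \ell+1$. If anything, your version is slightly more explicit, since you verify directly (using acyclicity) that adjacent vertices get distinct labels, a point the paper leaves implicit.
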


\begin{proof}  Any acyclic orientation contains a source, i.e.\ a vertex with no in-coming edges. Consider all sources and colour them in colour 1. Remove all sources along with the edges connected to them, and colour the sources in the obtained acyclic graph in colour 2. Proceed in this way. If the longest path in $G$ is of length at most $m-2$, then it is possible to colour $G$ in at most $m-1$ colours contradicting the chromatic number of $G$ being $m$. Thus, $G$ contains a  directed path of length at least $m-1$.  \end{proof}

\subsection{Kneser graphs and their complements}

\begin{theorem}[\cite{L78}]\label{chr-num-Kneser} 
For $n\geq 2k-1$, the chromatic number of the Kneser graph $K(n,k)$ is $n-2k+2$. \end{theorem}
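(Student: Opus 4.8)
The plan is to establish the two inequalities $\chi(K(n,k))\le n-2k+2$ and $\chi(K(n,k))\ge n-2k+2$ separately. The first is elementary combinatorics, while the second is the topological core of the result (the Lov\'asz--Kneser theorem) and is where all the difficulty lies.

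For the upper bound I would exhibit an explicit proper colouring with $m:=n-2k+2$ colours. Colour a $k$-subset $S\subseteq[n]$ by $\min(S)$ whenever $\min(S)\le m-1=n-2k+1$, and by colour $m$ otherwise, i.e.\ when $S\subseteq\{n-2k+2,\dots,n\}$. Two subsets sharing a colour $i\le m-1$ both contain $i$, hence are not disjoint; two subsets of colour $m$ are both $k$-subsets of a set of size $2k-1$, hence by pigeonhole are not disjoint. Thus adjacent vertices get distinct colours and $\chi(K(n,k))\le n-2k+2$. (When $n=2k-1$ this already settles the claim, since then $m=1$ and $K(2k-1,k)$ has no edges.)

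For the lower bound I would argue by contradiction using the Borsuk--Ulam theorem in its Lyusternik--Shnirelman form: if $S^{d}$ is covered by $d+1$ sets, each of which is open or closed (a mixture being permitted), then one of them contains a pair of antipodal points. Put $d:=n-2k+1$ and fix $n$ points $x_1,\dots,x_n$ on $S^{d}$ in general position, so that no hyperplane through the origin contains more than $d$ of them. Assuming a proper colouring $c$ of $K(n,k)$ with $d$ colours exists, define for $1\le i\le d$ the set $A_i\subseteq S^{d}$ of all $x$ for which the open hemisphere centred at $x$ contains all $k$ of the points indexed by some $S\in\binom{[n]}{k}$ with $c(S)=i$; each $A_i$ is open, being a finite union of intersections of open half-space conditions. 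Let $A_{d+1}=S^{d}\setminus(A_1\cup\cdots\cup A_d)$, a closed set; these $d+1$ sets cover $S^{d}$. If some $A_i$ with $i\le d$ contains antipodes $x,-x$, the two witnessing monochromatic $k$-subsets lie in disjoint open hemispheres, hence are disjoint, contradicting that $c$ is proper. If instead $A_{d+1}$ contains antipodes $x,-x$, each of the two open hemispheres bounded by the hyperplane orthogonal to $x$ contains at most $k-1$ of the $x_j$, so at least $n-2(k-1)=d+1$ of them lie on that hyperplane, contradicting general position. Hence no $d$-colouring exists and $\chi(K(n,k))\ge d+1=n-2k+2$.

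The main obstacle is the lower bound: it cannot be obtained by manipulating the intersection pattern alone, and genuinely requires a topological input — the Borsuk--Ulam theorem, or equivalently its combinatorial surrogate Tucker's lemma if one prefers to avoid topology. The routine points requiring care are the existence of $n$ points in general position on $S^{d}$ (a transversality/measure-zero argument), the openness of the sets $A_i$, and the exact form of the Lyusternik--Shnirelman covering theorem allowing one closed set among $d$ open ones (a standard reduction to the all-closed case).
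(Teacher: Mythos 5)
Your proof is correct, but note that the paper does not prove this statement at all: it is quoted as Theorem~\ref{chr-num-Kneser} directly from Lov\'asz's 1978 paper \cite{L78}, so there is no in-paper argument to compare against. What you have written is essentially Greene's streamlined version of the Lov\'asz--Kneser theorem: the explicit $(n-2k+2)$-colouring by minima (with the ``tail'' colour on $k$-subsets of a $(2k-1)$-element window) for the upper bound, and the Lyusternik--Shnirelman/Borsuk--Ulam hemisphere argument with points in general position on $S^{n-2k+1}$ for the lower bound. This differs from Lov\'asz's original route, which constructs the neighbourhood complex of $K(n,k)$ and bounds its connectivity; your approach buys a much shorter and more self-contained argument at the cost of invoking the mixed open/closed covering form of Lyusternik--Shnirelman, which you correctly flag as needing the standard reduction. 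The only points I would tighten are cosmetic: state explicitly that the two monochromatic witnesses in the $A_i$ case are distinct vertices (immediate, since they are disjoint nonempty sets), and note that the degenerate case $d=0$ (i.e.\ $n=2k-1$) is trivial since any nonempty graph needs at least one colour. Neither affects the validity of the argument.
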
 

The following is a well known and easy to see fact.

\begin{lemma}\label{clique-lemma} When $n < ck$, $K(n,k)$ does not contain cliques $K_c$ of size $c$, whereas it does contain such cliques when $n \geq ck$.  \end{lemma}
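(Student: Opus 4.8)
The plan is to reduce the statement to an elementary counting fact about set systems. By definition of the Kneser graph, a clique $K_c$ in $K(n,k)$ is precisely a family $\{A_1,\ldots,A_c\}$ of $k$-element subsets of $[n]$ that are pairwise adjacent, i.e.\ pairwise disjoint. So I would first record the equivalence: $K(n,k)$ contains $K_c$ if and only if there exist $c$ pairwise disjoint $k$-subsets of $[n]$.

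For the first claim, I would argue that if such a family exists, then since the $A_i$ are pairwise disjoint their union has exactly $ck$ elements and is contained in $[n]$, forcing $n\geq ck$; contrapositively, $n<ck$ rules out any $K_c$. For the second claim, when $n\geq ck$ I would simply display an explicit clique, for instance the vertices $A_i=\{(i-1)k+1,(i-1)k+2,\ldots,ik\}$ for $1\leq i\leq c$, which are $c$ pairwise disjoint $k$-subsets of $[ck]\subseteq[n]$, so the corresponding vertices induce a $K_c$.

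I do not expect any genuine obstacle here; the only point requiring a moment's care is that we allow $n$ as small as $k$ (so $K(n,k)$ may be edgeless), but the argument above is uniform in $n$ and handles such degenerate cases automatically.
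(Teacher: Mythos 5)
Your proof is correct; the paper itself gives no proof of this lemma, merely calling it ``a well known and easy to see fact,'' and your argument (cliques correspond to pairwise disjoint $k$-subsets, whose union needs $ck$ elements, plus an explicit construction of consecutive blocks when $n\geq ck$) is exactly the standard reasoning the authors are implicitly invoking.
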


An {\em independent set} is a set of vertices  no two of which are connected by an edge.  The {\em independence number} of a graph is the size of a maximal independent set.
 
\begin{theorem}[The Erd\H{o}s-Ko-Rado Theorem; \cite{EKR61}]\label{independence-number-Kneser} The independence number of the Kneser graph $K(n,k)$, equivalently, the size of the largest clique in $\overline{K(n,k)}$, is ${n-1\choose k-1}$.\end{theorem}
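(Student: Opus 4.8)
The plan is to handle the two directions separately. The equality between the independence number of $K(n,k)$ and the clique number of $\overline{K(n,k)}$ is immediate, since complementation interchanges independent sets with cliques; so it suffices to determine the maximum size of a family $\mathcal{F}$ of $k$-subsets of $[n]$ that is \emph{intersecting}, i.e.\ any two members meet. For the lower bound I would exhibit the \emph{star}: fix an element, say $1\in[n]$, and take all $k$-subsets containing it; this family is clearly intersecting and has size $\binom{n-1}{k-1}$, so the independence number is at least $\binom{n-1}{k-1}$. (Here and below we assume $n\ge 2k$; when $n<2k$ every two $k$-subsets already meet, the whole vertex set of $K(n,k)$ is independent, and $\binom{n-1}{k-1}$ is not the answer — which is why the statement is used only in that range.)

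For the upper bound I would use Katona's cyclic-permutation argument. Fix a cyclic ordering of $[n]$ around a circle, and call a $k$-subset an \emph{arc} of this ordering if its elements are consecutive on the circle. The key lemma is: for any fixed cyclic ordering, at most $k$ members of an intersecting family $\mathcal{F}$ are arcs. To see this, suppose $A\in\mathcal{F}$ is an arc occupying positions $\{i,i+1,\ldots,i+k-1\}\pmod n$; an arc $B$ meeting $A$ must have its starting position among the $2k-1$ values $i-k+1,\ldots,i+k-1$. Discarding $B=A$, the remaining $2k-2$ starting positions split into $k-1$ pairs $\{\,i-t,\ i+k-t\,\}$ for $t=1,\ldots,k-1$, and the two arcs in each such pair are disjoint — this is exactly where $n\ge 2k$ is used, so that the two length-$k$ arcs do not wrap around and overlap. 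Since $\mathcal{F}$ is intersecting, it contains at most one arc from each pair, hence at most $(k-1)+1=k$ arcs in total.

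Finally I would double count the set of pairs $(\pi,A)$ where $\pi$ is a cyclic ordering of $[n]$ and $A\in\mathcal{F}$ is an arc of $\pi$. There are $(n-1)!$ cyclic orderings, and by the lemma each contributes at most $k$ such $A$, so the number of pairs is at most $k\,(n-1)!$. On the other hand a fixed $k$-subset $A$ is an arc in exactly $k!\,(n-k)!$ cyclic orderings — glue $A$ into a block, order it internally in $k!$ ways, and arrange this block together with the remaining $n-k$ elements around the circle in $(n-k)!$ ways — so the number of pairs equals $|\mathcal{F}|\cdot k!\,(n-k)!$. Comparing the two counts gives $|\mathcal{F}|\le \dfrac{k\,(n-1)!}{k!\,(n-k)!}=\dbinom{n-1}{k-1}$, matching the lower bound and finishing the proof.

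The main obstacle is the key lemma about arcs: the pairing of the neighbouring arcs into disjoint pairs, together with the observation that $n\ge 2k$ is precisely what keeps those pairs disjoint; everything else is routine counting. An alternative route is the Kruskal--Katona / compression (``shifting'') proof, which replaces $\mathcal{F}$ by a combinatorially shifted family of the same size and then argues directly, but the circle method above is shorter and I would prefer it here.
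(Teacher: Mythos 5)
The paper does not prove this statement at all: it is imported as the classical Erd\H{o}s--Ko--Rado theorem with a citation to \cite{EKR61}, so there is no in-paper argument to compare yours against. Your proof is Katona's cyclic-permutation proof, and it is correct and complete. The reduction to intersecting families via complementation is immediate; the star gives the lower bound; the key lemma is right, since the $2k-2$ arcs other than $A$ that can meet $A$ do pair up as $\{i-t,\ i+k-t\}$, and each such pair covers $2k$ consecutive positions on the circle, which are distinct precisely because $n\ge 2k$, so the two arcs of a pair are disjoint and an intersecting family takes at most one of them, for a total of at most $1+(k-1)=k$ arcs; and the double count is right, since a fixed $k$-set is an arc of exactly $k!\,(n-k)!$ of the $(n-1)!$ cyclic orderings, yielding $|\mathcal{F}|\le k\,(n-1)!/\bigl(k!\,(n-k)!\bigr)=\binom{n-1}{k-1}$. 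You are also correct to flag the hypothesis $n\ge 2k$: the formula fails for $n<2k$ (where the whole vertex set of $K(n,k)$ is independent), and the paper indeed only invokes the theorem for $n=2k+1$, so your proof covers every use made of it.
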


\begin{theorem}[\cite{Baranyai}]\label{chromatic-numb-complement} The chromatic number of the graph $\overline{K(n,k)}$ is $\left\lceil {n\choose k}/\left\lfloor \frac{n}{k} \right\rfloor \right\rceil$. \end{theorem}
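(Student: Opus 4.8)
\emph{Plan.} The idea is to recast a proper coloring of $\overline{K(n,k)}$ as a covering of the $k$-subsets of $[n]$ by families of pairwise disjoint $k$-sets, establish an immediate counting lower bound on the number of colors, and match it with Baranyai's partition theorem. First I would observe that a set of vertices is independent in $\overline{K(n,k)}$ precisely when it is a clique in $K(n,k)$, i.e.\ a family of pairwise disjoint $k$-element subsets of $[n]$; by Lemma~\ref{clique-lemma} (the largest such family has $\lfloor n/k\rfloor$ members, since disjoint $k$-sets must fit inside $[n]$). Hence a proper $m$-coloring of $\overline{K(n,k)}$ is exactly a partition of the $\binom{n}{k}$ vertices into $m$ families of pairwise disjoint $k$-sets, and $\chi(\overline{K(n,k)})$ is the least number of such families needed to cover all of $\binom{[n]}{k}$.

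\emph{The two bounds.} For the lower bound, each of the $m$ color classes covers at most $\lfloor n/k\rfloor$ vertices, so $m\ge \binom{n}{k}/\lfloor n/k\rfloor$, and since $m$ is an integer, $m\ge \lceil \binom{n}{k}/\lfloor n/k\rfloor\rceil$. For the upper bound, set $m=\lceil \binom{n}{k}/\lfloor n/k\rfloor\rceil$ and choose nonnegative integers $a_1,\dots,a_m$ summing to $\binom{n}{k}$ and as equal as possible, so that each $a_i\le \lceil \binom{n}{k}/m\rceil\le \lfloor n/k\rfloor$ (the last inequality because $m\ge \binom{n}{k}/\lfloor n/k\rfloor$ forces $\binom{n}{k}/m\le \lfloor n/k\rfloor$ and $\lfloor n/k\rfloor$ is an integer). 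Baranyai's theorem then guarantees a partition of $\binom{[n]}{k}$ into families $E_1,\dots,E_m$ with $|E_i|=a_i$ and the sets inside each $E_i$ pairwise disjoint; here the size bound $a_i\le\lfloor n/k\rfloor$ is exactly what makes pairwise disjointness feasible. This partition is a proper $m$-coloring of $\overline{K(n,k)}$, so $\chi(\overline{K(n,k)})\le m$, and combined with the lower bound we get the claimed value.

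\emph{Sketch of Baranyai's theorem, which is the heart of the matter.} The argument processes the ground set $[n]$ one element at a time: for $j=0,1,\dots,n$ one maintains, for every class $i$ and every $T\subseteq[j]$, an integer $x^{(j)}_{i,T}$ recording how many of the eventual sets in class $i$ have trace $T$ on $\{1,\dots,j\}$, subject to $\sum_i x^{(j)}_{i,T}=\binom{n-j}{k-|T|}$, $\sum_T x^{(j)}_{i,T}=a_i$, and a feasibility invariant ensuring that the traces in each class can be realized by pairwise disjoint partial sets with room to be completed inside $\{j+1,\dots,n\}$. Passing from $j$ to $j+1$ splits each $x^{(j)}_{i,T}$ into the part whose sets acquire the element $j+1$ (total $\binom{n-j-1}{k-|T|-1}$ over all classes) and the part whose sets do not (total $\binom{n-j-1}{k-|T|}$); the proportional rational apportionment satisfies all marginal equations but is not integral, so one rounds it via the standard integer-rounding lemma for matrices (a consequence of the integrality of the transportation polytope, equivalently of integral max-flow), choosing the rounding so the feasibility invariant survives. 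At $j=n$ the trace of a set is the set itself, so $x^{(n)}_{i,A}\in\{0,1\}$ reads off the partition. The main obstacle — the only nonroutine step — is exactly this construction: pinning down the right feasibility invariant (one that simultaneously controls the trace profile of every class on every initial segment of $[n]$) and checking that a judicious integer rounding of the fractional apportionment preserves it; by contrast the reformulation and the lower bound are routine.
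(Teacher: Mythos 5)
The paper does not prove this statement at all --- it is imported verbatim as a known result with a citation to Baranyai, so there is no in-paper argument to compare against. Your derivation is correct and is the standard one: independent sets of $\overline{K(n,k)}$ are families of pairwise disjoint $k$-sets of size at most $\lfloor n/k\rfloor$, giving the lower bound $\lceil\binom{n}{k}/\lfloor n/k\rfloor\rceil$; and for the upper bound your check that the near-equal class sizes satisfy $a_i\le\lceil\binom{n}{k}/m\rceil\le\lfloor n/k\rfloor$ (using that $\lfloor n/k\rfloor$ is an integer) is exactly what is needed to invoke Baranyai's partition theorem, since a class of size $a_i\le\lfloor n/k\rfloor$ that is almost-regular covers each point at most once and hence consists of pairwise disjoint sets. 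Your sketch of Baranyai's theorem itself (tracking trace profiles on initial segments and integer-rounding the fractional apportionment via the integrality of the transportation polytope) correctly identifies the one genuinely hard step; everything surrounding it is routine and you have it right.
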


\subsection{Semi-transitive graphs and word-representability}

\begin{theorem}[\cite{HKP16}]\label{3-color-graphs}  Any $3$-colourable graph is semi-transitive. \end{theorem}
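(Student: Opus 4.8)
The plan is to build a semi-transitive orientation directly out of a proper $3$-colouring. Fix a proper colouring $c\colon V(G)\to\{1,2,3\}$ and orient every edge $uv$ from its lower-coloured endpoint to its higher-coloured endpoint; since adjacent vertices receive distinct colours this rule is well defined and orients every edge exactly once.

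First I would verify acyclicity: the colour strictly increases across every oriented edge, hence strictly increases along every directed path, so no directed cycle can exist. The very same observation bounds path lengths: a directed path $v_0\to v_1\to\cdots\to v_k$ satisfies $c(v_0)<c(v_1)<\cdots<c(v_k)$, and with only three colours available this forces $k\le 2$.

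It then remains to check the shortcut-free condition. For a directed path of length $k\le 1$ the semi-transitivity requirement is vacuous. For $k=2$, a path $v_0\to v_1\to v_2$ forces $c(v_0)=1$, $c(v_1)=2$, $c(v_2)=3$; if there is an edge between $v_0$ and $v_2$ it must be oriented $v_0\to v_2$, and then all three edges $v_0\to v_1$, $v_1\to v_2$, $v_0\to v_2$ are present, so the condition ``$v_i\to v_j$ for all $0\le i<j\le k$'' holds automatically. Thus the orientation is acyclic and admits no shortcut, i.e.\ it is semi-transitive.

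The only genuine idea here — as opposed to routine checking — is that a $3$-colouring caps the length of every colour-monotone directed path at $2$, which is precisely short enough that a shortcut (which needs a long path of length at least $4$) can never arise; there is no real obstacle beyond this remark. It is worth noting that the argument is, in a sense, tight: it breaks down already for $4$-colourable graphs, consistently with the minimal non-semi-transitive graph $W_5$ being $4$-chromatic.
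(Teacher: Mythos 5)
Your proof is correct and follows essentially the same route as the paper: orient each edge from the lower colour to the higher colour, note acyclicity and that every directed path has length at most $2$, which is too short for any violation of semi-transitivity. Your explicit check of the $k=2$ case (the triangle is automatically transitively oriented) is a slightly more careful spelling-out of the same observation.
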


\begin{proof} Colour the vertices in a given 3-colourable graph $G$ in colours 1, 2 and 3, and orient the edges from a smaller colour to a larger colour. Such orientation is clearly acyclic. Moreover, it is shortcut-free since the longest direct path is of length 2, while for a shortcut we need a directed path of length 3. Thus, the orientation is semi-transitive. \end{proof}

\begin{lemma}[\cite{AKM15}]\label{lemma} Suppose that the vertices in $\{a,b,c,d\}$ induce a subgraph $S$ in a partially oriented graph such that $a\rightarrow b$ and $b\rightarrow c$ are edges, $cd$ and $da$ are non-oriented edges, and $S$ is different from the complete graph $K_4$. Then, the unique way to orient $cd$ and $da$ in order not to create a directed cycle or a shortcut is $a\rightarrow d$ and $d\rightarrow c$. \end{lemma}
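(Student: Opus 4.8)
The plan is a short case analysis over the four ways of orienting the pair of non-oriented edges $cd$ and $da$, eliminating three of them. The single structural input is this: since the edges $ab$, $bc$, $cd$, $da$ are already present in $S$, the only pairs of vertices of $S$ that can fail to be adjacent are $\{a,c\}$ and $\{b,d\}$; and because $S\neq K_4$, at least one of these two pairs is a non-edge. I fix one such non-edge for the argument.

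First I would rule out the orientation $c\to d$ together with $d\to a$: combined with the given edges $a\to b$ and $b\to c$ it produces the directed $4$-cycle $c\to d\to a\to b\to c$, which is forbidden in an acyclic orientation. Next, take $a\to d$ together with $c\to d$. Then $a\to b\to c\to d$ is a directed path and $a\to d$ is an edge, and since one of $\{a,c\}$, $\{b,d\}$ is a non-edge, the vertices $\{a,b,c,d\}$ induce a shortcut with long path $a\to b\to c\to d$ and shortcutting edge $a\to d$. Symmetrically, $d\to a$ together with $d\to c$ yields the directed path $d\to a\to b\to c$ with shortcutting edge $d\to c$; the fixed non-edge now occupies position $\{d,b\}$ or $\{a,c\}$ along this path, so $\{a,b,c,d\}$ is again a shortcut. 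Hence none of these three orientations is admissible, which leaves only $a\to d$ and $d\to c$.

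One should also check directly that the surviving orientation $a\to d$, $d\to c$ creates neither a directed cycle nor a shortcut on $\{a,b,c,d\}$: any directed path of length $3$ on these four vertices would have to use the diagonal $bd$, in which case $ac$ is forced to be the missing edge and there is no shortcutting edge. This completes the proof. I do not expect a real obstacle here; the only point deserving explicit mention is that the hypothesis $S\neq K_4$ is used exactly, and only, to supply the non-edge invoked in the two shortcut cases — if $S=K_4$ those two orientations extend to transitive orientations of $K_4$ and the conclusion genuinely fails.
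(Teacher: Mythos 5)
Your proof is correct and follows essentially the same route as the paper's: eliminate $(c\to d,\,d\to a)$ via the $4$-cycle, and eliminate the other two bad orientations as shortcuts with shortcutting edges $a\to d$ and $d\to c$ respectively, using $S\neq K_4$ to supply the missing pair among $\{a,c\}$, $\{b,d\}$. Your explicit identification of where the non-edge sits and the final sanity check of the surviving orientation are welcome additions the paper leaves implicit, but the argument is the same.
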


\begin{proof} Indeed, suppose that the edge $cd$ is oriented as $c\rightarrow d$. Then, orienting $ad$ will either give the cycle $a\rightarrow b\rightarrow c\rightarrow d\rightarrow a$, or the shortcut with the shortcutting edge $a\rightarrow d$. Thus, the orientation of $cd$ must be $d\rightarrow c$. To complete the proof, we note that orienting $da$ as $d\rightarrow a$ will give a shortcut with the shortcutting edge $d\rightarrow c$.\end{proof}

To accomodate a simple proof of Theorem~\ref{semi-trans-complement}, next we introduce the notion of a word-representable graph  and state the relation between semi-transitive graphs and word-representable graphs in Theorem~\ref{equiv-sem-word}.

Letters $x$ and $y$ alternate in a word $w$ if after deleting in $w$ all letters but the copies of $x$ and $y$ we either obtain a word $xyxy\cdots$ (of even or odd length) or a word $yxyx\cdots$ (of even or odd length). For example, the letters 2 and 5 alternate in the word 11245431252, while the letters 2 and 4 do not alternate in this word. A simple graph $G=(V,E)$ is {\em word-representable} if there exists a word $w$ over the alphabet $V$ such that letters $x$ and $y$ alternate in $w$ iff $xy\in E$. By definition, $w$ {\em must} contain {\em each} letter in $V$. We say that $w$ {\em represents} $G$. For example, each complete graph $K_n$ can be represented by any permutation $\pi$ of $\{1,2,\ldots,n\}$, or by $\pi$ concatenated any number of times.   Also, the empty graph $E_n$ (also known as an edgeless graph, or a null graph) on vertices $\{1,2,\ldots,n\}$ can be represented by $1122\cdots nn$, or by any permutation concatenated with the same permutation written in the reverse order.

\begin{theorem}[\cite{HKP16}]\label{equiv-sem-word} A graph is semi-transitive if and only if it is word-representable.\end{theorem}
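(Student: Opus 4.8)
The plan is to establish the two implications separately; write $G=(V,E)$. For the implication that every word-representable graph is semi-transitive, suppose a word $w$ over $V$ represents $G$, and orient each edge $xy$ of $G$ from $x$ to $y$ exactly when the leftmost occurrence of $x$ in $w$ precedes that of $y$. This orientation is a refinement of the linear order on $V$ given by position of first occurrence, hence acyclic, so the point is to show it is shortcut-free. Given a directed path $v_0\to v_1\to\cdots\to v_k$ with $v_0v_k\in E$, the first occurrences satisfy $\mathrm{fo}(v_0)<\mathrm{fo}(v_1)<\cdots<\mathrm{fo}(v_k)$, and since $v_0v_k\in E$ the restriction of $w$ to $\{v_0,v_k\}$ is a perfect alternation $v_0v_kv_0v_k\cdots$. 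I would then argue by induction on $k$: combining the interleavings forced by the consecutive edges $v_iv_{i+1}$ with the rigid interleaving forced by $v_0v_k$ shows that $v_1$ must alternate with every $v_j$ (a failure would force a repeated-letter factor at a position the chain of interleavings forbids), and then the shorter path $v_1\to\cdots\to v_k$ together with the edge $v_1v_k$ lets the induction conclude that $v_iv_j\in E$ for all $i<j$.

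The substantive implication is the converse: a semi-transitive orientation of $G$ yields a representing word. Here I would induct on $|V|$, the base case being immediate. Fix a semi-transitive orientation, choose a source $v$, so that every neighbour of $v$ is an out-neighbour; the orientation induced on $G-v$ is still acyclic and shortcut-free, hence semi-transitive, so by induction $G-v$ has a representing word $w'$, which I would arrange to have enough copies of each letter to leave room to maneuver (the construction can be set up to output such words). Because inserting copies of the letter $v$ into $w'$ does not alter the subword on any pair of old letters, the sole task is to interleave copies of $v$ so that in the resulting word $v$ alternates with exactly the vertices of $N^+(v)$. The key structural input is semi-transitivity: whenever $u\in N^+(v)$ and $x$ lies on a directed path from $v$ to $u$, the edge $v\to x$ is forced, so the set of vertices reachable from $v$ along paths leading into $N^+(v)$ is contained in $N^+(v)$ itself; this closure is what allows a consistent placement rule for the copies of $v$, the placement being read off from the level decomposition $L_0,L_1,\dots$ of the digraph — $L_i$ being the vertices whose longest incoming directed path has length $i$, every $L_i$ independent, every edge raising the level, every longest path meeting all lower levels.

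I expect the second implication to be the main obstacle: exhibiting the insertion pattern and proving that the enlarged word realizes the adjacency relation of $G$ exactly. The delicate point is the ``long-range'' pairs, namely the endpoints $v_0,v_k$ of a directed path: these are precisely the pairs that a naive concatenation-of-linear-orders construction gets wrong (it would make such a pair alternate whenever one endpoint is reachable from the other, whether or not they are adjacent), and it is exactly the shortcut-free hypothesis — pinning down which such pairs must be edges — that makes a globally consistent placement of the duplicated letters possible. Managing this bookkeeping, and choosing the inductive word $w'$ with enough structure that every required alternation and non-alternation involving $v$ can be realized at once, is the technical core of the argument.
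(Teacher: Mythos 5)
First, a point of reference: the paper does not prove this statement at all --- it is imported verbatim from Halld\'orsson, Kitaev and Pyatkin \cite{HKP16} and used as a black box (e.g.\ in Theorem~\ref{semi-trans-complement}). So there is no in-paper proof to compare against, and your sketch has to stand on its own. Your easy direction (word-representable $\Rightarrow$ semi-transitive) uses the correct orientation --- orient each edge toward the vertex whose first occurrence is later --- and the right kind of interleaving argument. The clean way to finish it is to observe that the occurrence counts are non-increasing along the path and drop by at most one from $v_0$ to $v_k$, whence the $j$-th occurrences satisfy $v_0^{(j)}<v_1^{(j)}<\cdots<v_k^{(j)}<v_0^{(j+1)}$, so \emph{all} pairs alternate. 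As written, your induction only establishes that $v_1$ alternates with every $v_j$ and then recurses on $v_1\to\cdots\to v_k$; this never addresses the pairs $v_0v_j$ for $2\le j\le k-1$, which are also required for the configuration not to be a shortcut. That is fixable, but it is a gap in the argument as stated.

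The converse direction, however, is not proved: you set up an induction (delete a source $v$, represent $G-v$, reinsert copies of $v$), correctly extract the one structural consequence of shortcut-freeness you would need (the out-neighbourhood of a source carries a transitive orientation, since any directed path inside $N^+(v)$ extends a path from $v$ ending at an out-neighbour), and then explicitly defer ``exhibiting the insertion pattern and proving that the enlarged word realizes the adjacency relation exactly'' as ``the technical core.'' That deferred step \emph{is} the theorem. In particular, the premise that one can take an arbitrary representing word $w'$ of $G-v$ ``with enough copies of each letter'' and then place copies of $v$ so that $v$ alternates with exactly $N^+(v)$ is not justified and is not true without strong structural control on $w'$: having many copies of each letter does not by itself create admissible insertion slots, and the level decomposition you invoke at the end is a property of the digraph, not of the word $w'$ produced by an unspecified induction. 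For comparison, the actual proof in \cite{HKP16} does not peel off sources at all: it fixes a maximum clique $K$, inducts on $n-\kappa(G)$ to prove the quantitative statement that $G$ is $2(n-\kappa)$-representable, and at each step inserts the new vertex $x$ exactly twice into a word rebuilt from uniform words together with permutations of $N^-(x)$ and $N^+(x)$ (both transitively oriented by shortcut-freeness), verifying all alternations explicitly. Without that construction, or an equally concrete substitute, your proposal establishes only the easy half of the equivalence.
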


\section{Semi-transitivity of Kneser graphs}\label{sec3}

\begin{theorem}\label{3-col-Kneser} For $k\leq n\leq 2k+1$, $K(n,k)$ is semi-transitive. \end{theorem}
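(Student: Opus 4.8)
The plan is to reduce the statement to Theorem~\ref{3-color-graphs}: it suffices to prove that $K(n,k)$ is $3$-colourable whenever $k\le n\le 2k+1$. I would organise the verification according to three sub-ranges of $n$, since the structure of $K(n,k)$ changes qualitatively across them.

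For $k\le n\le 2k-1$, any two $k$-subsets $A,B$ of $[n]$ satisfy $|A|+|B|=2k>n\ge |A\cup B|$, hence $A\cap B\neq\varnothing$; so $K(n,k)$ has no edges and is (trivially) $1$-colourable. For $n=2k$, two $k$-subsets of $[2k]$ are disjoint exactly when they are complementary, so $K(2k,k)$ is a perfect matching on $\binom{2k}{k}$ vertices, in particular bipartite and hence $2$-colourable. For $n=2k+1$, Theorem~\ref{chr-num-Kneser} gives $\chi(K(2k+1,k))=(2k+1)-2k+2=3$. Thus $\chi(K(n,k))\le 3$ throughout the range, and Theorem~\ref{3-color-graphs} (orient each edge from the smaller colour class to the larger one) produces a semi-transitive orientation.

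I do not anticipate a real obstacle: apart from the two elementary observations about the edgeless case $n\le 2k-1$ and the matching case $n=2k$, the only ingredient is the $n=2k+1$ instance of Lov\'asz's chromatic-number formula, already recorded as Theorem~\ref{chr-num-Kneser}. If one wished to avoid invoking Lov\'asz's theorem, one could instead exhibit an explicit proper $3$-colouring of the odd graph $K(2k+1,k)$; this is possible but notationally heavier, so citing Theorem~\ref{chr-num-Kneser} is the cleaner route.
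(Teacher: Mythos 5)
Your proof is correct and follows essentially the same route as the paper: establish that $K(n,k)$ is $3$-colourable for $k\le n\le 2k+1$ and invoke Theorem~\ref{3-color-graphs}. The only difference is that you treat the range $n\le 2k-1$ (where the graph is edgeless) and $n=2k$ (a perfect matching) by hand rather than citing Theorem~\ref{chr-num-Kneser} for the whole range, which is in fact slightly more careful, since the Lov\'asz formula is stated only for $n\ge 2k-1$.
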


\begin{proof} By Theorem~\ref{chr-num-Kneser}, for $n\leq 2k+1$, $K(n,k)$ is 3-colourable, and thus is semi-transitive by Theorem~\ref{3-color-graphs}. \end{proof}

The following lemma is easy to see. 

\begin{lemma}\label{embedding-lem} The graph $K(n,k)$ (resp., $\overline{K(n,k)}$) is an induced subgraph in any graph $K(m,k)$ (resp., $\overline{K(m,k)}$) for $m\geq n$.\end{lemma}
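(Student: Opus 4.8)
The plan is to exhibit an explicit vertex identification realizing $K(n,k)$ inside $K(m,k)$. First I would use the standard inclusion $[n]\subseteq[m]$, valid since $n\le m$, so that every $k$-element subset of $[n]$ is in particular a $k$-element subset of $[m]$. This gives an injection $\iota$ from $V(K(n,k))$ into $V(K(m,k))$ which identifies a vertex $A$ of $K(n,k)$ with the vertex $A$ of $K(m,k)$.

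Next I would verify that $\iota$ is an isomorphism onto the subgraph of $K(m,k)$ induced on the image. The key (and essentially only) observation is that for two $k$-subsets $A,B\subseteq[n]$, the condition $A\cap B=\emptyset$ does not depend on whether $A$ and $B$ are viewed as subsets of $[n]$ or of $[m]$. Hence $A$ and $B$ are adjacent in $K(n,k)$ if and only if $\iota(A)$ and $\iota(B)$ are adjacent in $K(m,k)$, so the subgraph of $K(m,k)$ induced on $\{A\subseteq[n] : |A|=k\}$ is isomorphic to $K(n,k)$, which proves the first assertion.

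For the complement statement I would invoke the general fact that complementation commutes with taking induced subgraphs: if $H$ is the subgraph of $G$ induced on a vertex set $S$, then $\overline{H}$ is the subgraph of $\overline{G}$ induced on the same set $S$, since non-adjacency in $G$ (restricted to pairs from $S$) is precisely adjacency in $\overline{G}$ (restricted to pairs from $S$). Applying this with $G=K(m,k)$, $H=K(n,k)$, and $S$ the set of $k$-subsets of $[n]$ yields that $\overline{K(n,k)}$ is an induced subgraph of $\overline{K(m,k)}$.

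I do not expect any real obstacle here; the only point to keep in mind is the degenerate parameter range (for instance $n<2k$, where $K(n,k)$ can be edgeless, or even have a single vertex when $n=k$), but the argument is insensitive to this because it never uses the presence of any edge — it only uses that disjointness of subsets is an ambient-independent property.
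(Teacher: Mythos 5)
Your argument is exactly the paper's proof, just written out in more detail: the paper likewise observes that the subgraph of $K(m,k)$ induced on the vertices built from elements of $\{1,\ldots,n\}$ is isomorphic to $K(n,k)$, and that the complement statement follows since complementation commutes with taking induced subgraphs. The proposal is correct and complete.
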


\begin{proof} The subgraph of $K(m,k)$ induced only by the vertices formed by the elements in $\{1,2,\ldots,n\}$  is isomorphic to $K(n,k)$. The statement for the complements now follows as well. \end{proof}

In the following theorem, we repeatedly use the fact that $K(6,2)$ is $K_4$-free by Lemma~\ref{clique-lemma}, which allows the application of Lemma~\ref{lemma}. 

\begin{theorem}\label{Knes-K62-thm} The Kneser graph $K(n,2)$ is not semi-transitive for $n\geq 6$. \end{theorem}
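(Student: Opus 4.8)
The plan is to reduce the statement to the single graph $K(6,2)$ and then rule out any semi-transitive orientation of $K(6,2)$ by an iterated forcing argument based on Lemma~\ref{lemma}.

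\textbf{Reduction to $K(6,2)$.} By Lemma~\ref{embedding-lem}, $K(6,2)$ is an induced subgraph of $K(n,2)$ for every $n\geq 6$. The restriction of a semi-transitive orientation to an induced subgraph is again semi-transitive: acyclicity is inherited, and since a shortcut is an \emph{induced} subgraph of the oriented graph, any shortcut occurring in the restriction would already be a shortcut in the ambient graph. Hence semi-transitivity is a hereditary property, and it suffices to prove that $K(6,2)$ is not semi-transitive. Note that $K(6,2)$ is $K_4$-free by Lemma~\ref{clique-lemma}, since $6<8=4\cdot 2$; this is precisely what makes Lemma~\ref{lemma} applicable to every $4$-vertex subgraph of $K(6,2)$ of the shape required there.

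\textbf{Normalization.} Suppose, for contradiction, that $D$ is a semi-transitive, hence acyclic, orientation of $K(6,2)$. Then $D$ has a source, and since $S_6$ acts as a vertex-transitive group of automorphisms of $K(6,2)$ (a permutation of $[6]$ preserves disjointness of pairs), we may assume the source is the vertex $12$; thus $12\to v$ for every $v\in N(12)$. Here $N(12)$ is the set of the six pairs contained in $\{3,4,5,6\}$ and induces a perfect matching, with $34\sim 56$, $35\sim 46$ and $36\sim 45$; each of the remaining eight vertices (the pairs meeting $\{1,2\}$ other than $12$) is adjacent to exactly one vertex of the triangle $\{12,34,56\}$. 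Using the stabilizer $S_{\{1,2\}}\times S_{\{3,4,5,6\}}$ of $12$ in $S_6$, we may additionally assume $34\to 56$, so that $\{12,34,56\}$ is transitively oriented; this still leaves a residual symmetry group $\langle(1\,2),(3\,4),(5\,6)\rangle$ of order $8$.

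\textbf{Forcing.} From here the proof is a finite case analysis driven by Lemma~\ref{lemma}. The elementary step is: whenever a directed path $a\to b\to c$ is present and $d$ is a vertex with $ad,dc\in E(K(6,2))$, the edges $ad$ and $dc$ are forced to $a\to d\to c$ (the hypothesis $\{a,b,c,d\}\neq K_4$ of Lemma~\ref{lemma} holds automatically by $K_4$-freeness). Starting from the out-edges of the source $12$ together with $34\to 56$, we orient edge after edge by repeated application of this step; at the few places where no directed path of length $2$ is yet available we branch on the orientation of a single edge, using the residual order-$8$ symmetry to normalize such choices and keep the number of branches small. In every branch the forcing is eventually blocked — some edge cannot be oriented without creating a directed cycle or a shortcut — contradicting the existence of $D$. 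Hence $K(6,2)$, and therefore $K(n,2)$ for all $n\geq 6$, is not semi-transitive.

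\textbf{The main obstacle} is organizational rather than conceptual: after the normalization the orientation is far from determined, so branching is unavoidable, and the argument is clean only if the edges are oriented in a well-chosen order and the residual symmetry is exploited aggressively to collapse cases. Finding such an order, and checking that each of the resulting (few) branches really does close off, is where the real work lies.
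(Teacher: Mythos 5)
Your reduction to $K(6,2)$ and the observation that $K_4$-freeness makes Lemma~\ref{lemma} applicable throughout are both correct and match the paper. However, the core of the argument is missing: you describe a \emph{strategy} (normalize via a source and the symmetry group, then iterate Lemma~\ref{lemma}, branching where necessary) and then simply assert that ``in every branch the forcing is eventually blocked.'' That assertion is precisely the theorem to be proved, and nothing in your write-up verifies it. You even concede the point yourself when you say that checking that each branch closes off ``is where the real work lies'' --- that work is not done, so what you have is a proof plan, not a proof. It is not at all evident a priori that the forcing terminates in a contradiction rather than in a consistent semi-transitive orientation; indeed the paper needs a chain of roughly a dozen carefully ordered forcing steps (each an application of Lemma~\ref{lemma} or a direct shortcut/cycle argument) before a shortcut becomes unavoidable, and exhibiting that chain is the entire content of the proof.

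A secondary difference, worth noting: the paper does not start from a source. It uses Theorem~\ref{chr-num-Kneser} (chromatic number $4$) together with Theorem~\ref{long-directed-path} to extract a directed path $A\rightarrow B\rightarrow C\rightarrow D$ of length $3$, observes that $AD$ cannot be an edge (else $\{A,B,C,D\}$ would be a shortcut, by $K_4$-freeness), and then uses the symmetry of $K(6,2)$ to reduce to a single configuration of that path. This gives a much stronger initial foothold than your source-plus-one-oriented-edge normalization: the paper's starting configuration already pins down four vertices in a rigid relative position, which is what allows the subsequent forcing to proceed without any branching at all. Your setup, by contrast, would genuinely require branching, and you have neither enumerated the branches nor closed any of them. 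To repair the proof you would need to either carry out your case analysis explicitly or adopt the paper's chromatic-number starting point and then supply the explicit forcing chain.
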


\begin{proof} By Lemma~\ref{embedding-lem}, and the hereditary nature of semi-transitivity, it is sufficient to prove the theorem for  $K(6,2)$. 

We proceed by contradiction. Assume that $K(6,2)$ can be oriented semi-transitively and fix such an orientation. Since the chromatic number of $K(6,2)$ is 4 by Theorem~\ref{chr-num-Kneser}, by Theorem~\ref{long-directed-path}  the oriented copy of $K(6,2)$ must contain a directed path $A\rightarrow B\rightarrow C \rightarrow D$. Note that if the edge $A\rightarrow D$ exists, we would obtain a contradiction, since the vertices $A,B,C,D$ would induce a shortcut ($K(6,2)$ is $K_4$-free). Thus, there is no edge in $K(6,2)$ between $A$ and $D$, and without loss of generality, we can assume that one of the following three cases occurs, where $abcdef$ is a permutation of $[6]$: 
\begin{itemize}
\item[Case 1.] $A=ab$, $B=cd$, $C=be$ and $D=af$ ($C$ is uniquely determined once the assumption that $D$ involves $a$ is made). Note that in this case, $B\rightarrow D$ is an edge. 
\item[Case 2.] $A=ab$, $B=cd$, $C=ef$ and $D=ac$ (the assumption here is that $D$ shares an element with $A$ and $B$). This case is equivalent to Case 1, since reversing all edges in a semi-transitive orientation gives a semi-transitive orientation, and the letters $a,b,\ldots$ can be renamed. So, Case 2 does not need to be considered. 
\item[Case 3.] $A=ab$, $B=cd$, $C=be$ and $D=ac$ (the assumption here is that $D$ shares an element with $A$ and $B$, and $C$ shares an element with $A$). In this case, consider the 4-cycle induced by $A$, $B$, $C$ and $df$. By Lemma~\ref{lemma}, we must have the following edges: $A\rightarrow df$ and $df \rightarrow C$. But then, the directed path $A\rightarrow df \rightarrow C \rightarrow D$ is equivalent to the path in Case 1, so Case 3 does not need to be considered. 
\end{itemize}

Thus, we only need to consider Case 1 and arrive at a contradiction. Our strategy here is to consider a number of graphs induced by 4 vertices (in certain order) in which orientation of edges is uniquely determined from our assumptions. Eventually, we will show that shortcuts are unavoidable. In what follows, for convenience, we do not use the letters $A$, $B$, $C$ and $D$, writing the 2-set partitions corresponding to them instead. 

\begin{itemize}
\item From the graph induced by $ab$, $df$, $cd$, $be$, by Lemma~\ref{lemma}, we must have $ab\rightarrow df$ and $df \rightarrow  be$.
\item If $ce \rightarrow ab$ is an edge, then either we have the cycle $ce \rightarrow  ab \rightarrow  cd \rightarrow  be \rightarrow  af \rightarrow  ce$, or we have the shortcut  with the shortcutting edge $ce\rightarrow af$ and the long path $ce\rightarrow  ab\rightarrow  cd\rightarrow  be \rightarrow  af$. Thus, we must have $ab\rightarrow ce$.
\item If $af\rightarrow ce$ is an edge, then $ab\rightarrow cd\rightarrow be\rightarrow af\rightarrow ce$ is the long path and $ab\rightarrow ce$ is the shortcutting edge in a shortcut. Thus, we must have $ce\rightarrow af$. 
\item Replacing $ce$ by $de$ in the last two bullet points, we see that $ab\rightarrow de$ and $de\rightarrow af$ are edges. 
\item If $ce\rightarrow df$ is an edge, then $ce\rightarrow df\rightarrow be\rightarrow af$ is the long path and $ce\rightarrow af$ is the shortcutting edge in a shortcut. Thus, we must have $df\rightarrow ce$. 
\item From the graph induced by $df$, $be$, $af$ and $bc$, by Lemma~\ref{lemma}, we must have $df\rightarrow bc$ and $bc\rightarrow af$. 
\item From the graph induced by $ab$, $df$, $de$ and $bc$, to avoid a shortcut, we must have $de\rightarrow bc$.
\item From the graph induced by $ab$, $de$, $ef$ and $bc$, by Lemma~\ref{lemma}, we must have $ab\rightarrow ef$ and $ef\rightarrow bc$.
\item From the graph induced by $ef$, $bc$, $bd$ and $af$, by Lemma~\ref{lemma}, we must have $ef\rightarrow bd$ and $bd\rightarrow af$.
\item From the graph induced by $ab$, $ef$, $ce$ and $bd$, by Lemma~\ref{lemma}, we must have $ab\rightarrow ce$ and $ce\rightarrow bd$.
\item From the graph induced by $df$, $ce$, $ae$ and $bd$, by Lemma~\ref{lemma}, we must have $df\rightarrow ae$ and $ae\rightarrow bd$.
\item From the graph induced by $cd$, $ae$, $be$ and $df$, we must have $cd\rightarrow ae$ to avoid a shortcut.
\item From the graph induced by $ab$, $cf$, $cd$ and $ae$, we must have $cf\rightarrow ae$ to avoid a shortcut.
\item From the graph induced by $cf$, $ae$, $de$ and $bc$, we must have $ae\rightarrow bc$ to avoid a shortcut.
\end{itemize}
But we obtain a contradiction, since there is a shortcut with the long path $cd\rightarrow ae\rightarrow bc\rightarrow af$ and the shortcutting edge $cd\rightarrow af$. Thus, $K(6,2)$ is not semi-transitively orientable.
\end{proof}

\begin{remark}{\em We note that $K(6,2)$, and thus any $K(m,2)$ for $m\geq 6$, is not a minimal non-semi-transitive graph. Software check (using \cite{G}) shows that removing any vertex  in $K(6,2)$ gives a non-semi-transitive graph.}
 \end{remark}
 
The following theorem generalizes Theorem~\ref{Knes-K62-thm}. 
 
\begin{theorem}\label{Knes-main-thm} For $n\geq 15k-24$ and $k\geq 2$, $K(n,k)$ is not semi-transitive. 
\end{theorem}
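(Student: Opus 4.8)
The plan is to deduce this from the already-established non-semi-transitivity of $K(6,2)$ (Theorem~\ref{Knes-K62-thm}) by exhibiting, for every $n\geq 15k-24$, an induced subgraph of $K(n,k)$ isomorphic to $K(6,2)$; since semi-transitivity is hereditary, this immediately gives the result. (Alternatively, one may first treat $n=15k-24$ and then invoke Lemma~\ref{embedding-lem} to pass to all larger $n$.) The whole proof is an explicit padding construction, and the bound $15k-24$ will fall out of a one-line count.

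First I would fix the embedding. Identify $\{1,\dots,6\}$ with a subset of $[n]$, so that the $15$ vertices of $K(6,2)$ are exactly the $2$-element subsets of $\{1,\dots,6\}$. Regard $\{7,\dots,n\}$, of size $n-6$, as a pool of ``padding'' elements, and choose pairwise disjoint $(k-2)$-element subsets $P_S\subseteq\{7,\dots,n\}$, one for each $2$-element subset $S$ of $\{1,\dots,6\}$. This is possible precisely when $15(k-2)\leq n-6$, i.e.\ when $n\geq 15k-24$, which is our hypothesis (for $k=2$ there is nothing to choose and the construction degenerates to $K(6,2)$ itself). Define $\varphi(S)=S\cup P_S$, a $k$-element subset of $[n]$; the sets $\varphi(S)$ are pairwise distinct, since for $k\geq 3$ their intersections with $\{7,\dots,n\}$ are the distinct sets $P_S$.

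Next I would verify that $\varphi$ is a graph isomorphism onto the subgraph of $K(n,k)$ that it spans. For distinct $2$-subsets $S,T$ of $\{1,\dots,6\}$ we have $\varphi(S)\cap\varphi(T)=(S\cap T)\cup(P_S\cap P_T)=S\cap T$, because $P_S$ and $P_T$ are disjoint and also disjoint from $\{1,\dots,6\}$. Hence $\varphi(S)$ and $\varphi(T)$ are disjoint — that is, adjacent in $K(n,k)$ — if and only if $S\cap T=\emptyset$, i.e.\ if and only if $S$ and $T$ are adjacent in $K(6,2)$. Thus the subgraph of $K(n,k)$ induced on $\{\varphi(S):S\in\binom{\{1,\dots,6\}}{2}\}$ is isomorphic to $K(6,2)$, which is not semi-transitive by Theorem~\ref{Knes-K62-thm}. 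Since an induced subgraph of a semi-transitive graph is semi-transitive, $K(n,k)$ is not semi-transitive.

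There is no genuinely hard step here; the only thing to get right is the count that produces the threshold: the construction consumes $6$ reserved ``core'' elements together with $15$ pairwise disjoint padding sets of size $k-2$, for a total of $15(k-2)+6=15k-24$ elements, so it succeeds exactly when $n\geq 15k-24$. I would also remark that the $k=2$ instance is literally Theorem~\ref{Knes-K62-thm}, while for $k\geq 3$ the padding is essential, and that the bound $15k-24$ is not claimed to be optimal.
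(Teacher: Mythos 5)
Your proposal is correct and follows essentially the same route as the paper: pad each of the $15$ two-element subsets of $\{1,\dots,6\}$ with $k-2$ fresh, pairwise disjoint elements, count $6+15(k-2)=15k-24$, and conclude via heredity of semi-transitivity, Theorem~\ref{Knes-K62-thm}, and Lemma~\ref{embedding-lem}. Your write-up is in fact slightly more careful than the paper's, since you explicitly verify that $\varphi(S)\cap\varphi(T)=S\cap T$, so the copy of $K(6,2)$ is genuinely induced.
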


\begin{proof} We claim that such a $K(n,k)$ contains $K(6,2)$ as an induced subgraph, and thus is non-semi-transitive by Theorem~\ref{Knes-K62-thm}. Indeed, consider inserting  to each 2-subset involved in building $K(6,2)$ $(k-2)$ distinct elements so that no two 2-subsets receive the same new element. Then, the number of new elements is $15(k-2)$, and the total number of elements is $6 + 15(k-2)=15k-24$, where 6 came from the 6 elements used to build $K(6,2)$. Our construction shows that $K(6,2)$ is an induced subgraph in $K(15k-24,k)$ since no edge in $K(6,2)$ is affected by the construction, and thus in any $K(n,k)$,  $n\geq 15k-24$, by Lemma~\ref{embedding-lem}. Since $K(6,2)$ is non-semi-transitive by Theorem~\ref{Knes-K62-thm}, $K(n,k)$ is non-semi-transitive for   $n\geq 15k-24$.\end{proof}

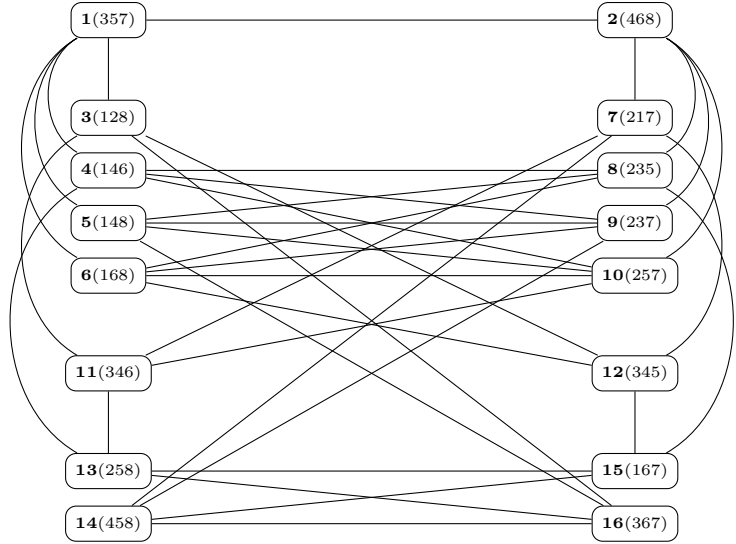
\begin{figure}
\begin{center}
\begin{tikzpicture}[node distance=1cm,auto,main node/.style={rectangle,rounded corners,draw,align=center}]
%node/.style={circle,draw,inner sep=1pt,minimum size=2pt}
% "sep" above is referrring to space around label; removing "draw" no circle will be drawn
%circle,draw,align=center,minimum size=0.5cm}]

\node[main node] (1) {{\tiny {\bf 1}(357)}};
\node[main node] (2) [right of=1,xshift=6cm] {{\tiny {\bf 2}(468)}};

\node[main node] (3) [below of=1,yshift=-0.3cm] {{\tiny {\bf 3}(128)}};
\node[main node] (4) [below of=3,yshift=0.3cm] {{\tiny {\bf 4}(146)}};
\node[main node] (5) [below of=4,yshift=0.3cm] {{\tiny {\bf 5}(148)}};
\node[main node] (6) [below of=5,yshift=0.3cm] {{\tiny {\bf 6}(168)}};

\node[main node] (7) [below of=2,yshift=-0.3cm] {{\tiny {\bf 7}(217)}};
\node[main node] (8) [below of=7,yshift=0.3cm] {{\tiny {\bf 8}(235)}};
\node[main node] (9) [below of=8,yshift=0.3cm] {{\tiny {\bf 9}(237)}};
\node[main node] (10) [below of=9,yshift=0.3cm] {{\tiny {\bf 10}(257)}};

\node[main node] (11) [below of=6,yshift=-0.3cm] {{\tiny {\bf 11}(346)}};

\node[main node] (13) [below of=11,yshift=-0.3cm] {{\tiny {\bf 13}(258)}};
\node[main node] (14) [below of=13,yshift=0.3cm] {{\tiny {\bf 14}(458)}};

\node[main node] (12) [below of=10,yshift=-0.3cm] {{\tiny {\bf 12}(345)}};

\node[main node] (15) [below of=12,yshift=-0.3cm] {{\tiny {\bf 15}(167)}};
\node[main node] (16) [below of=15,yshift=0.3cm] {{\tiny {\bf 16}(367)}};

\path
(1) edge (2)
      edge (3)
      edge  [bend right=60] (4)
      edge [bend right=60] (5)
      edge [bend right=60] (6);
\path
(2) edge (7)
      edge  [bend left=60] (8)
      edge [bend left=60] (9)
      edge [bend left=60] (10);
\path
(3) edge [bend right=60] (11) edge (12) edge (16);
\path
(7) edge [bend left=60] (12) edge (11) edge (14);
\path
(4) edge (8) edge (9) edge (10) edge [bend right=60] (13);
\path
(5) edge (8) edge (9) edge (10) edge (16);
\path
(6) edge (8) edge (9) edge (10) edge (12);
\path
(8) edge [bend left=60] (15);
\path
(9) edge (14);
\path
(10) edge (11);
\path
(13) edge (15) edge (16) edge (11);
\path
(14) edge (15) edge (16);
\path
(12) edge (15);

\end{tikzpicture}
\caption{A minimal non-semi-transitive subgraph $S$ of $K(8,3)$. Name of a vertex is in bold, and the set partition corresponding to it is in parenthesis.}\label{K83-fig}
\end{center}
\end{figure}

To extend our knowledge on semi-transitivity of Kneser graphs to the unknown cases, we have looked at $K(8,3)$ having 56 vertices and 280 edges. Using the software \cite{G}, we have learned that the subgraph of $K(8,3)$ formed by the 46 lexicographically smallest vertices (123, 124, 126, etc) is semi-transitive, and a semi-transitive orientation was found within 2 seconds. However, adding one more vertex to the subgraph (456, the 47th lexicographically smallest one), resulted in no result returned by the software within a few hours, which was an indication, but not a given fact, that the graph may not be semi-transitive. Thus, our next goal was to find a non-semi-transitive subgraph $S$ of $K(8,3)$ of a smaller size, for which the software would return a definite answer on non-semi-transitivity of $S$, and thus of $K(8,3)$. Such a graph $S$, presented in Figure~\ref{K83-fig}, was found using clustering nodes into independent sets and then eliminating certain nodes. Checking non-semi-transitivity of $S$ takes just below 4 seconds using the software  \cite{G}, and the minimality of $S$ is straightforward to check using the same software. $S$ contains 16 vertices and 36 edges. 

Of course, it would be desirable to find a non-computer based proof of non-semi-transitivity of $K(8,3)$, e.g. similar to that of non-semi-transitivity of $K(6,2)$ in Theorem~\ref{Knes-K62-thm}, but we were not able to achieve it. We note that determining if a triangle-free graph is semi-transitive is an NP-hard problem \cite{HKP16}, and presenting all $2^{35}$ orientations (one edge can be assumed to have any fixed orientation) and showing a shortcut, or cycle, in each of them is not feasible for a human. In fact, there is the branching method explained in Section 4.5 in \cite{KL15} (also, see \cite{CKL17}) to dramatically decrease the number of cases to consider while proving that a graph is not semi-transitive. The basic idea of the method is to avoid branching for those edges for which orientation is uniquely determined in order to be semi-transitive. However, this method, being efficient for not so many edges, still leads to too many cases to consider for $S$, and thus is rather useless in the situation.     

\begin{remark}{\em $S$ in Figure~\ref{K83-fig} is the first explicit example of a triangle-free non-semi-transitive graph. The existence of such graphs was established in \cite{HKP11} using Erd\H{o}s' theorem (also see \cite[Section 4.4]{KL15}).}\end{remark}

\section{Semi-transitivity of the complement of Kneser graphs}

\begin{theorem}\label{semi-trans-complement} For $n\leq 2k$, $\overline{K(n,k)}$ is semi-transitive. \end{theorem}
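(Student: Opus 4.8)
The plan is to describe $\overline{K(n,k)}$ explicitly and then exhibit a word representing it, appealing to Theorem~\ref{equiv-sem-word}. The key structural observation is elementary: for $n\le 2k$, any two $k$-subsets $A,B$ of $[n]$ satisfy $|A|+|B|=2k\ge n$, so they can be disjoint only when $n=2k$ and $B=[2k]\setminus A$. Hence, writing $N:=\binom{n}{k}$, the graph $\overline{K(n,k)}$ is the complete graph $K_N$ when $n<2k$, and is $K_N$ with the perfect matching $M=\{\{A,\overline{A}\}:A\}$ deleted when $n=2k$, where $\overline{A}:=[2k]\setminus A$ (since $k\ge 1$ this is a fixed-point-free involution of the vertex set, so $M$ really is a perfect matching and $N$ is even). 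The case $n<2k$ is then immediate, since a complete graph is represented by any permutation of its vertices --- equivalently, it is transitively orientable --- and hence is semi-transitive.

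For $n=2k$, list the $m:=N/2$ matched pairs as $\{A_1,\overline{A_1}\},\dots,\{A_m,\overline{A_m}\}$ and take the word
\[
 w \;=\; A_1\overline{A_1}\,A_2\overline{A_2}\cdots A_m\overline{A_m}\;\;\overline{A_1}A_1\,\overline{A_2}A_2\cdots \overline{A_m}A_m ,
\]
that is, the permutation $A_1\overline{A_1}\cdots A_m\overline{A_m}$ concatenated with the same permutation after swapping the two letters inside each pair. I claim $w$ represents $\overline{K(2k,k)}$. For a matched pair the two letters occur in the pattern $x\,y\,y\,x$, so they do not alternate; for any two distinct non-matched letters one verifies --- splitting into the cases ``both are first in their pairs'', ``both are second'', and ``one of each'', and comparing the positions of the two letters across the two halves of $w$ --- that they occur as $xyxy$ or $yxyx$, hence alternate. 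Thus $w$ represents $\overline{K(2k,k)}$, which is therefore word-representable and, by Theorem~\ref{equiv-sem-word}, semi-transitive. Together with the case $n<2k$ this proves the theorem for all $n\le 2k$.

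I do not expect a genuine obstacle: the structural step is pigeonhole, and the complete-graph case is trivial. The only point requiring care is the routine position-counting that verifies $w$ has exactly the prescribed alternations. If one prefers to avoid words altogether, the same result follows from a direct orientation argument for the case $n=2k$: fix a linear order on the vertices in which each $A$ is immediately followed by $\overline{A}$, and orient every edge from the smaller to the larger vertex. This orientation is acyclic, and since $A$ and $\overline{A}$ are consecutive in the order but non-adjacent, no directed path can contain both of them; hence every directed path lies inside a clique and is oriented transitively, so no shortcut can occur and the orientation is semi-transitive.
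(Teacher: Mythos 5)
Your proof is correct and follows essentially the same route as the paper: identify $\overline{K(n,k)}$ as complete for $n<2k$ and as $K_N$ minus a perfect matching for $n=2k$, then represent the latter by exactly the word the paper uses ($12\cdots x$ followed by the same permutation with each matched pair swapped) and invoke Theorem~\ref{equiv-sem-word}. Your closing direct-orientation argument (order the vertices with each pair $A,\overline{A}$ consecutive, orient along the order, and note every directed path lies in a transitively oriented clique) is a valid word-free alternative, but the main argument coincides with the paper's.
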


\begin{proof} Clearly, if $n<2k$, then $\overline{K(n,k)}$ is a complete graph (no pair of $k$-subsets is disjoint), and thus it is semi-transitive. On the other hand, if $n=2k$, then $\overline{K(n,k)}$ is a complete graph with a perfect matching removed (each non-edge is formed by a $k$-subset and its complement). Label  $\overline{K(2k,k)}$ so that the non-edges are formed by the vertices $2i-1$ and $2i$ for $1\leq i\leq k$ and let $x={n\choose k}$. Then, the word $1234\cdots (x-1)x2143\cdots x(x-1)$ represents $\overline{K(2k,k)}$, and by Theorem~\ref{equiv-sem-word},  $\overline{K(2k,k)}$ is semi-transitive.
 \end{proof}

\begin{lemma}\label{inequality} For $k\geq 4$, we have ${2k\choose k-1}+k<\frac{1}{2}{2k+1 \choose k}-2$. \end{lemma}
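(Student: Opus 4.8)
The plan is to reduce the inequality to a clean binomial estimate and then establish it by induction on $k$. First I would use $\binom{2k+1}{k}=\binom{2k}{k}+\binom{2k}{k-1}$ together with $\binom{2k}{k}=\frac{k+1}{k}\binom{2k}{k-1}$ (which comes from the ratio $\binom{2k}{k}/\binom{2k}{k-1}=(k+1)/k$), giving $\binom{2k+1}{k}=\frac{2k+1}{k}\binom{2k}{k-1}$. Substituting this into the claimed inequality and clearing denominators, the statement ${2k\choose k-1}+k<\frac12{2k+1\choose k}-2$ is seen to be equivalent to $\binom{2k}{k-1}>2k(k+2)$.

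It then remains to prove $\binom{2k}{k-1}>2k(k+2)$ for all $k\geq 4$, which I would do by induction. The base case $k=4$ reads $56>48$, which holds. For the inductive step I would use the ratio $\frac{\binom{2k+2}{k}}{\binom{2k}{k-1}}=\frac{(2k+2)(2k+1)}{k(k+2)}$, so that the inductive hypothesis yields $\binom{2k+2}{k}>2k(k+2)\cdot\frac{(2k+2)(2k+1)}{k(k+2)}=4(k+1)(2k+1)$; it then suffices to check $4(k+1)(2k+1)\geq 2(k+1)(k+3)$, i.e.\ $2(2k+1)\geq k+3$, which holds for every $k\geq 1$. This closes the induction and hence proves the lemma.

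There is no serious obstacle here; the only points requiring care are getting the two binomial-ratio identities exactly right, and noting that the inequality is comparatively tight at the base case (the two sides being $56$ and $48$), so the reduction must be performed exactly rather than through crude bounds. An alternative route would be to combine the elementary estimate $\binom{2k}{k}\geq 4^k/(2k+1)$ with a handful of explicit checks for small $k$, but the induction above is shorter and entirely self-contained.
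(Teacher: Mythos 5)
Your proof is correct and follows essentially the same route as the paper: both apply Pascal's identity, reduce the claim algebraically to an equivalent inequality (your $\binom{2k}{k-1}>2k(k+2)$ is exactly the paper's $(2k+4)\,k!\,(k+1)!<(2k)!$ rewritten in binomial form), and finish by induction on $k$ with base case $k=4$. All ratio identities and the inductive step check out.
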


\begin{proof} Using ${2k+1 \choose k}={2k \choose k}+{2k \choose k-1}$, we need to prove that
$$\frac{1}{2}{2k\choose k-1}+k< \frac{1}{2}{2k \choose k}-2, \mbox{ or }$$ 
$$\frac{(2k)!}{(k-1)!(k+1)!}+2k< \frac{(2k)!}{k!k!}-4, \mbox{ or }$$
$$k(2k)!+2kk!(k+1)!<(k+1)(2k)!-4k!(k+1)! \mbox{ or }$$
$$(2k+4)k!(k+1)!<(2k)!.$$ The last statement can be proved by induction on $k$ with the easy to check base case of $k=4$. Indeed, using the induction hypothesis, we have $$(2(k+1))!=(2k+2)(2k+1)(2k)! > (2k+2)(2k+1)(2k+4)k!(k+1)!$$
$$> (2k+6)(k+1)!(k+2)!$$ where the last inequality follows from the easy to see, for $k\geq 1$, inequality $$(2k+2)(2k+1)(2k+4)>(2k+6)(k+2)(k+1),$$
or $6k^3+16k^2+6k-4>0$. \end{proof}

\begin{theorem}\label{n=2k+1} For $k\geq 2$, the graph $\overline{K(2k+1,k)}$ is not semi-transitive. \end{theorem}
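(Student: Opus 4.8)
The plan is to argue by contradiction. Suppose $G:=\overline{K(2k+1,k)}$ admits a semi-transitive (hence acyclic) orientation $D$; I will clash an upper bound on the number of vertices on a directed path in $D$, coming from the local structure of $G$, against a lower bound coming from its chromatic number.

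First I would collect the relevant parameters of $G$. A vertex of $G$ is a $k$-subset $A\subseteq[2k+1]$, and its non-neighbours are exactly the $k$-subsets of $[2k+1]\setminus A$, a $(k+1)$-element set; hence every vertex of $G$ has precisely $k+1$ non-neighbours. By Theorem~\ref{independence-number-Kneser} the clique number of $G$ is $\omega:=\binom{2k}{k-1}$. Since $\lfloor(2k+1)/k\rfloor=2$ for every $k\ge 2$, Theorem~\ref{chromatic-numb-complement} gives $\chi(G)=\bigl\lceil\tfrac12\binom{2k+1}{k}\bigr\rceil$.

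The heart of the argument is the claim that in $D$ no directed path has more than $\omega+k+1$ vertices. Let $v_0\to v_1\to\cdots\to v_\ell$ be a directed path in $D$ and let $j$ be the largest index with $v_0v_j\in E(G)$ (so $j\ge 1$, since $v_0v_1$ is an edge). The directed path $v_0\to\cdots\to v_j$ has adjacent endpoints, so semi-transitivity forces $\{v_0,\dots,v_j\}$ to induce a clique; thus $j+1\le\omega$. If $j<\ell$, then by the maximality of $j$ each of $v_{j+1},\dots,v_\ell$ is a non-neighbour of $v_0$, so $\ell-j\le k+1$; in every case $\ell+1\le\omega+k+1$. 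On the other hand, Theorem~\ref{long-directed-path} supplies a directed path in $D$ of length at least $\chi(G)-1$, i.e.\ with at least $\chi(G)$ vertices. Comparing the two bounds yields $\chi(G)\le\omega+k+1$, that is, $\bigl\lceil\tfrac12\binom{2k+1}{k}\bigr\rceil\le\binom{2k}{k-1}+k+1$. For $k\ge 4$ this contradicts Lemma~\ref{inequality}, which gives $\binom{2k}{k-1}+k+1<\tfrac12\binom{2k+1}{k}-1<\bigl\lceil\tfrac12\binom{2k+1}{k}\bigr\rceil$. Hence $\overline{K(2k+1,k)}$ is not semi-transitive for $k\ge 4$.

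It remains to treat $k=2$ and $k=3$, for which the counting inequality is not yet available (the Catalan-type gap $\binom{2k}{k}-\binom{2k}{k-1}$ does not exceed the slack $2k+4$ until $k=4$); here I would verify non-semi-transitivity of $\overline{K(5,2)}$, which has $10$ vertices, and of $\overline{K(7,3)}$ directly using the software \cite{G}, if necessary after passing to a small non-semi-transitive induced subgraph, as was done for $K(8,3)$. I expect these two base cases to be the only real obstacle: the path-counting argument is clean and uniform, but it simply does not bite for small $k$, so a bespoke or computational input there seems unavoidable. Finally, once $n=2k+1$ is handled, Lemma~\ref{embedding-lem} propagates non-semi-transitivity to all $\overline{K(n,k)}$ with $n\ge 2k+1$, which, combined with Theorem~\ref{semi-trans-complement}, establishes the claimed characterisation.
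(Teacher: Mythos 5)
Your argument for $k\ge 4$ is correct and is essentially the paper's: the same clash between the lower bound $\chi(G)=\bigl\lceil\tfrac12\binom{2k+1}{k}\bigr\rceil$ on the length of a directed path (Theorems~\ref{chromatic-numb-complement} and~\ref{long-directed-path}) and the upper bound $\omega+(k+1)=\binom{2k}{k-1}+k+1$ coming from the clique number and the fact that each vertex has exactly $k+1$ non-neighbours, resolved by Lemma~\ref{inequality}. Your phrasing via ``the largest $j$ with $v_0v_j\in E$'' is, if anything, slightly cleaner than the paper's. The $k=2$ case is also fine: $\overline{K(5,2)}$ is the line graph of $K_5$, whose non-word-representability is a citable result of \cite{KSSU}, and the graph is small enough to check by machine in any case.

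The genuine gap is $k=3$. You correctly observe that the counting argument does not bite there ($\chi=18$ versus $\omega+k+1=19$), but you then defer to the software, which is risky: the paper's own experience is that \cite{G} failed to return an answer on a $47$-vertex subgraph of $K(8,3)$ within hours, and $\overline{K(7,3)}$ has $35$ vertices and $525$ edges, so ``run the software'' is not a step you can take for granted, and you have not exhibited a small non-semi-transitive induced subgraph either. The paper instead closes this case with a one-extra-step refinement of your own counting argument, which you could have found: with $X_1=123$ at the head of a directed path $X_1\to\cdots\to X_{18}$, the vertices $X_{16},X_{17},X_{18}$ are non-neighbours of $X_1$, hence three of the four $3$-subsets of $\{4,5,6,7\}$. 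Since $X_2\neq X_1$, it contains some element of $\{4,5,6,7\}$, say $4$; as $4$ lies in three of the four $3$-subsets of $\{4,5,6,7\}$, at least one of $X_{17},X_{18}$, say $X_m$, contains $4$ as well, so $X_2X_m$ is an edge, necessarily oriented $X_2\to X_m$ by acyclicity. The long path $X_2\to\cdots\to X_m$ has at least $16>\omega=15$ vertices, so it cannot induce a clique, and $X_2\to X_m$ is a shortcutting edge --- contradiction. In short: your slack of $1$ in the inequality is absorbed by noting that the second vertex of the path, not just the first, must be adjacent to one of the tail vertices. Without this (or a completed computation), your proof covers only $k=2$ and $k\ge 4$.
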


\begin{proof} For $k=2$, we note that $\overline{K(5,2)}$ is the line graph of $K_5$, and it is proved in~\cite{KSSU} to be non-word-representable, and thus,  $\overline{K(5,2)}$ is not semi-transitive by Theorem~\ref{equiv-sem-word}.

Let  $k=3$, and suppose that $\overline{K(7,3)}$ admits a semi-transitive orientation. Fix such an orientation. By Theorem~\ref{chromatic-numb-complement}, the chromatic number of $\overline{K(7,3)}$ is 18, and thus, by Theorem~\ref{long-directed-path}, $\overline{K(7,3)}$ contains a directed path $X_1\rightarrow X_2 \rightarrow \cdots \rightarrow X_{18}$. Moreover, by Theorem~\ref{independence-number-Kneser}, the largest clique in $\overline{K(7,3)}$ is of size 15, and thus if $X_1\rightarrow X_i$ is an edge for $i\in \{16, 17, 18\}$, $X_1\rightarrow X_i$ would be the  shortcutting edge for the long path $X_1\rightarrow X_2 \rightarrow \cdots \rightarrow X_{i}$ (the graph induced by $X_1$, $X_2,\ldots,X_i$ cannot be transitive as that would mean that $\overline{K(7,3)}$ has a clique of size $>15$). Therefore, $X_1$ is not connected to $X_{16}$, $X_{17}$ and $X_{18}$, so that if $X_1=123$ (without loss of generality), then $X_{16}$, $X_{17}$ and $X_{18}$ are formed using the elements in $\{4,5,6,7\}$. Without loss of generality, assume that $X_2$ involves the element 4. Since at least one of $X_{17}$ and $X_{18}$ must involve 4, say $X_m$, where $m\in\{17,18\}$,  $X_2\rightarrow X_m$ must be the shortcutting edge with the long path  $X_2\rightarrow X_3 \rightarrow \cdots \rightarrow X_{m}$. Contradiction. Thus, $\overline{K(7,3)}$ is not semi-transitive.

Finally, let $k\geq 4$ and suppose that $K(2k+1,k)$ admits a semi-transitive orientation. Fix such an orientation. By Theorem~\ref{chromatic-numb-complement}, the chromatic number of $\overline{K(2k+1,k)}$ is $t=\left\lceil \frac{1}{2}{2k+1\choose k}\right\rceil$ and thus, by Theorem~\ref{long-directed-path}, $\overline{K(2k+1,k)}$ contains a directed path $X_1\rightarrow X_2 \rightarrow \cdots \rightarrow X_{t}$. Moreover, by Theorem~\ref{independence-number-Kneser}, the largest clique in $\overline{K(2k+1,k)}$ is of size $s={2k\choose k-1}$. Thus, $X_1$ cannot be connected to $X_i$ for $s+1\leq i\leq t$, and the number of such $X_i$s cannot exceed ${(2k+1)-k\choose k}=k+1$ (the $k$ elements used in $X_1$ are not available for $X_i$s). But then, we must have the following inequality
$$t-(s+1)+1\leq  k+1 \Rightarrow \left\lceil \frac{1}{2}{2k+1\choose k}\right\rceil - {2k\choose k-1}\leq  k+1 \Rightarrow $$
$$\frac{1}{2}{2k+1\choose k}-1- {2k\choose k-1}\leq  k+1,$$ which contradicts Lemma~\ref{inequality}. Thus, $K(2k+1,k)$ is not semi-transitively orientable for $k\geq 4$.
 \end{proof}

As an immediate corollary to Theorems~\ref{semi-trans-complement} and~\ref{n=2k+1} and Lemma~\ref{embedding-lem}, we have the following result. 

\begin{theorem}\label{complement-main} The complement graph $\overline{K(n,k)}$ of $K(n,k)$ is semi-transitive if and only if $n\leq 2k$. \end{theorem}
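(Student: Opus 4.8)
The plan is to obtain Theorem~\ref{complement-main} simply by assembling the three ingredients already in place: Theorem~\ref{semi-trans-complement}, which gives one direction, Theorem~\ref{n=2k+1}, which handles the boundary case of the other direction, and Lemma~\ref{embedding-lem}, which propagates non-semi-transitivity upward in $n$. Concretely, for the ``if'' direction I would invoke Theorem~\ref{semi-trans-complement} verbatim: it already asserts that $\overline{K(n,k)}$ is semi-transitive for every $n\leq 2k$ (covering both the complete-graph case $n<2k$ and the case $n=2k$ via an explicit word-representant). So no new work is needed there.

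For the ``only if'' direction I would argue the contrapositive: suppose $n\geq 2k+1$; I want to conclude $\overline{K(n,k)}$ is not semi-transitive. Theorem~\ref{n=2k+1} states exactly that $\overline{K(2k+1,k)}$ is not semi-transitive for all $k\geq 2$. (The case $k=1$ is degenerate and can be noted separately or excluded, since $K(n,1)$ is the complete graph $K_n$ and its complement is edgeless, hence trivially semi-transitive for all $n$; but for $k\geq 2$ the claim $n\leq 2k$ is the right statement.) By Lemma~\ref{embedding-lem}, $\overline{K(2k+1,k)}$ is an induced subgraph of $\overline{K(n,k)}$ whenever $n\geq 2k+1$. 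Since semi-transitivity is a hereditary property — an induced subgraph of a semi-transitively orientable graph inherits a semi-transitive orientation by restriction — it follows that $\overline{K(n,k)}$ cannot be semi-transitive for any $n\geq 2k+1$. Combining the two directions yields the stated equivalence.

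There is essentially no obstacle here; the theorem is explicitly flagged in the text as ``an immediate corollary,'' and the only points requiring a sentence of care are (i) recording that semi-transitivity passes to induced subgraphs, which is implicit in the definition via shortcuts and is used repeatedly above, and (ii) making sure the logical quantifiers line up, i.e.\ that ruling out $n=2k+1$ plus the monotone embedding really rules out all $n\geq 2k+1$, and that Theorem~\ref{semi-trans-complement} really covers all $n\leq 2k$. If one wanted the statement to include $k=1$ cleanly, one would observe separately that $\overline{K(n,1)}=E_n$ is word-representable (hence semi-transitive) for every $n$, so the ``only if'' part should be read as holding for $k\geq 2$; alternatively the hypothesis $k\geq 2$ can be carried through as in the supporting theorems. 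I would state the proof in two or three sentences and not belabor it.

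\begin{proof} If $n\leq 2k$, then $\overline{K(n,k)}$ is semi-transitive by Theorem~\ref{semi-trans-complement}. Conversely, suppose $n\geq 2k+1$. By Theorem~\ref{n=2k+1}, $\overline{K(2k+1,k)}$ is not semi-transitive, and by Lemma~\ref{embedding-lem} it is an induced subgraph of $\overline{K(n,k)}$; since an induced subgraph of a semi-transitive graph is semi-transitive (restrict the orientation), $\overline{K(n,k)}$ is not semi-transitive. Hence $\overline{K(n,k)}$ is semi-transitive if and only if $n\leq 2k$. \end{proof}
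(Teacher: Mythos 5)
Your proof is correct and follows exactly the paper's argument: Theorem~\ref{semi-trans-complement} for $n\leq 2k$, and Theorem~\ref{n=2k+1} combined with Lemma~\ref{embedding-lem} plus heredity of semi-transitivity for $n>2k$. Your side remark about the degenerate case $k=1$ is a reasonable observation but does not change the substance.
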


\begin{proof} By Theorem~\ref{semi-trans-complement}, $\overline{K(n,k)}$ is semi-transitive if $n\leq 2k$. Now, suppose that $n>2k$. Since  $\overline{K(2k+1,k)}$ is an induced subgraph in $\overline{K(n,k)}$ by Lemma~\ref{embedding-lem}, and  $\overline{K(2k+1,k)}$ is non-semi-transitive by Theorem~\ref{n=2k+1}, $\overline{K(n,k)}$ is also non-semi-transitive. \end{proof}

\section{Concluding remarks}
In this paper, we show that for $n\geq 15k-24$, $K(n,k)$ is not semi-transitive, while for $k\leq n\leq 2k+1$, $K(n,k)$ is semi-transitive. Also, we have used computations to show that the triangle-free graph $K(8,3)$ is not semi-transitive. Moreover, we have completely characterized semi-transitivity of the complement graph $\overline{K(n,k)}$ by showing that $\overline{K(n,k)}$ is semi-transitive if and only if $n\geq 2k$. We conclude the paper with the following open problems.

\begin{itemize}
\item Give a non-computer based proof of non-semi-transitivity of $K(8,3)$. 
\item More generally, is $K(2k+2,k)$ non-semi-transitive for any $k\geq 3$? If that would be the case, then  we would complete the classification of semi-transitive Kneser graphs by Lemma~\ref{embedding-lem} and Theorems~\ref{Knes-K62-thm} and~\ref{Knes-main-thm}, namely, that would imply that a Kneser graph is semi-transitive if and only if it is 3-colourable. 
\item Are there any smaller triangle-free non-semi-transitive graphs (by the number of vertices and/or the number of edges) than the graph $S$ in Figure~\ref{K83-fig}? Possible candidates for such a graph could be some subgraphs of $K(8,3)$, which could then also help to prove rigorously non-semi-transitivity of $K(8,3)$ (the fewer than 36 edges in such a graph could possibly be handled by the branching method in \cite[Section 4.5]{KL15}). As a relevant observation to searching for candidates here, note that triangle-free planar graphs are always semi-transitive as they are 3-colorable \cite{HKP11}.  
\end{itemize}

\section*{Acknowledgment} We are grateful to  Li-Da Tong for raising our interest in Kneser graphs.

\end{document}